\def \C{\mathbb{C}}
\def \Z{\mathbb{Z}}
\def \R{\mathbb{R}}
\def \p{\mathbb{P}}
\def \Q{\mathbb{Q}}
\def \A{\mathcal{A}}
\def \D{\mathcal{D}}
\def \K{{\bf K}}
\def \G{{\bf G}}
\def \w{{\underline{w}_0}}
\def \Div{\operatorname{Div}}
\def \vol{\operatorname{vol}}
\def \Sym{\operatorname{Sym}}
\def \codim{\operatorname{codim}}
\numberwithin{equation}{section}
\newtheorem{Th}{Theorem}[section]
\newtheorem{Prop}[Th]{Proposition}
\newtheorem{Cor}[Th]{Corollary}
\theoremstyle{definition}
\newtheorem{Ex}[Th]{Example}
\newtheorem{Def}[Th]{Definition}
\newtheorem{Rem}[Th]{Remark}
\begin{document}

\allowdisplaybreaks

\newcommand{\arXivNumber}{1911.00118}

\renewcommand{\thefootnote}{}

\renewcommand{\PaperNumber}{016}

\FirstPageHeading

\ShortArticleName{Intersections of Hypersurfaces and Ring of Conditions of a Spherical Homogeneous Space}

\ArticleName{Intersections of Hypersurfaces and Ring\\ of Conditions of a Spherical Homogeneous Space\footnote{This paper is a~contribution to the Special Issue on Algebra, Topology, and Dynamics in Interaction in honor of Dmitry Fuchs. The full collection is available at \href{https://www.emis.de/journals/SIGMA/Fuchs.html}{https://www.emis.de/journals/SIGMA/Fuchs.html}}}

\Author{Kiumars KAVEH~$^\dag$ and Askold G.~KHOVANSKII~$^{\ddag\S}$}

\AuthorNameForHeading{K.~Kaveh and A.G.~Khovanskii}

\Address{$^\dag$~Department of Mathematics, University of Pittsburgh, Pittsburgh, PA, USA}
\EmailD{\href{mailto:kaveh@pitt.edu}{kaveh@pitt.edu}}

\Address{$^\ddag$~Department of Mathematics, University of Toronto, Toronto, Canada}
\EmailD{\href{mailto:askold@math.utoronto.ca}{askold@math.utoronto.ca}}

\Address{$^\S$~Moscow Independent University, Moscow, Russia}

\ArticleDates{Received November 04, 2019, in final form March 14, 2020; Published online March 20, 2020}

\Abstract{We prove a version of the BKK theorem for the ring of conditions of a spherical homogeneous space~$G/H$. We also introduce the notion of ring of complete intersections, firstly for a spherical homogeneous space and secondly for an arbitrary variety. Similarly to the ring of conditions of the torus, the ring of complete intersections of~$G/H$ admits a~description in terms of volumes of polytopes.}

\Keywords{BKK theorem; spherical variety; Newton--Okounkov polytope; ring of conditions}

\Classification{14M27; 14M25; 14M10}

\renewcommand{\thefootnote}{\arabic{footnote}}
\setcounter{footnote}{0}

\section{Introduction}
Let $\Gamma_1,\ldots,\Gamma_n$ be a set of $n$ hypersurfaces in the $n$-dimensional complex torus $(\mathbb C^*)^n$ defined by equations $P_1=0,\ldots, P_n = 0$ where $P_1,\ldots,P_n$ are generic Laurent polynomials with given Newton polyhedra $\Delta_1,\ldots,\Delta_n$. The Bernstein--Koushnirenko--Khovanskii theorem (or BKK theorem, see \cite{Bernstein, Khovanskii-genus, Koushnirenko}) claims that the intersection number of the hypersurfaces $\Gamma_1,\ldots,\Gamma_n$ is equal to the mixed volume of $\Delta_1, \ldots, \Delta_n$ multiplied by $n!$. In particular when $\Delta_1=\cdots=\Delta_n=\Delta$ this theo\-rem can be regarded as giving a formula for the degree of an ample divisor class in a~projective toric variety.

Spherical varieties are a generalization of toric varieties to actions of reductive groups. Let~$G$ be a~connected reductive algebraic group. A normal variety $X$ with an action of $G$ is called {\it spherical} if a Borel subgroup of $G$ has an open orbit. When $G$ is a torus we get the definition of a toric variety. On the other hand, spherical varieties are generalizations of flag varieties $G/P$ as well. Similarly to toric varieties, the geometry of spherical varieties and their orbit structure can be read off from combinatorial and convex geometric data of fans and convex polytopes.

The BKK theorem has been generalized to spherical varieties by B. Kazarnovskii, M. Brion and the authors of the present paper. In particular, this generalization provides a formula for the degree of an ample $G$-linearized line bundle $L$ on a projective spherical variety $X$ (see \cite{Brion, Kazarnovskii, KKh-reductive}). The formula is given as the integral of a certain explicit function over the moment polytope (also called the Kirwan polytope) of $(X, L)$. This formula can be equivalently expressed as volume of a larger polytope associated to $(X, L)$. It is usually referred to as a Newton--Okounkov polytope of $(X, L)$ (see Section \ref{sec-NO-polytope-sph-var}).

In \cite{KKh-Annals, LM}, the BKK theorem has been generalized to intersection numbers of generic members of linear systems on any irreducible algebraic variety. The Newton--Okounkov bodies play a~key role in this generalization. All generalizations mentioned above deal with the intersection numbers of hypersurfaces in an algebraic variety.

Following the original ideas of Schubert, in the early 1980s De Concini and Procesi deve\-lo\-ped an intersection theory for algebraic cycles (which are not necessarily hypersurfaces) in a~symmetric homogeneous space $X = G/H$ (see~\cite{DP}). Their intersection theory, named the {\it ring of conditions of~$X$}, can be automatically generalized to a spherical homogeneous space~$X$.

For an algebraic torus $(\C^*)^n$ and more generally a horospherical homogeneous space $X$, nice descriptions of the ring of conditions are known. Moreover, combinatorial descriptions of cohomology rings as well as rings of conditions of interesting subclasses of spherical varieties have been obtained via equviariant cohomology methods by several authors (see \cite{Bifet-DeConcini-Procesi,Littelmann-Procesi, Strickland}). In our opinion, it is unlikely that for an arbitrary spherical homogeneous space, one can find such nice and transparent descriptions for the ring of conditions in terms of combinatorial data.

The contributions of the present paper are the following:
\begin{itemize}\itemsep=0pt
\item[(1)] We give a version of the BKK theorem in the ring of conditions of a spherical homogeneous space $X$, namely a formula for the intersection numbers of hypersurfaces in the ring of conditions in terms of volumes of polytopes (Theorem~\ref{th-main}). This modified version is as transparent as the original BKK theorem.
\item[(2)] Along the way, we introduce the notion of ring of complete intersections of a spherical homogeneous space. Unlike the ring of conditions, this ring admits a~nice description in terms of volumes of polytopes (see Theorem~\ref{th-ring-complete-intersec-vol-poly}, also cf.~\cite{Kaveh-note}). In the case when $X$ is a~torus~$(\C^*)^n$, the ring of complete intersections is isomorphic to the ring of conditions.
\item[(3)] We also introduce the ring of complete intersections for any irreducible algebraic variety (not necessarily equipped with a group action) and give a description of this ring in terms of volumes of convex bodies (Section~\ref{sec-ring-complete-intersec-general}).
\end{itemize}

We would like to point out that there is a big difference between the construction/definition of the ring of complete intersections for a spherical homogeneous space (as in~(2) above) and the construction/definition of the ring of complete intersections for an arbitrary variety (as in~(3) above). The first ring is defined similarly  to the ring of conditions except that in the definition of a cycle one should be more careful and consider (strongly) transversal hypersurfaces, The second ring is much larger and has a very different nature. It is defined by a general algebra construction that associates a (Poincar\'e duality) algebra to a vector space equipped with an $n$-linear form (Section~\ref{sec-alg-construction}). Nevertheless, the ring of complete intersections of a spherical homogeneous space can be described using the same general algebra construction.

\section{Preliminaries on spherical varieties}
In the rest of the paper we will use the following notation about reductive groups:
\begin{itemize}\itemsep=0pt
\item[--] $G$ denotes a connected complex reductive algebraic group.
\item[--] $B$ a Borel subgroup of $G$ and $U$, $T$ the maximal unipotent subgroup and a maximal torus contained in $B$ respectively.
\item[--] $G/H$ denotes a spherical homogeneous space, we let $\dim(G/H) = n$.
\end{itemize}

We recall some basic background material about spherical varieties. A normal $G$-variety $X$ is called \emph{spherical} if a Borel subgroup (and hence any Borel subgroup) has a dense orbit.
If~$X$ is spherical, it has a finite number of $G$-orbits as well as a finite number of
$B$-orbits. Spherical varieties are a generalization of toric varieties for actions of reductive groups. Analogous to toric varieties, the geometry of spherical varieties
can be read off from associated convex polytopes and convex cones. For an overview of the theory of spherical varieties, we refer the reader to~\cite{Perrin}.

It is a well-known fact that if ${\mathcal L}$ is a $G$-linearized line bundle on a spherical variety, then the space of sections $H^0(X, {\mathcal L})$ is a multiplicity free $G$-module.
For a quasi-projective $G$-variety $X$, this is equivalent to~$X$ being spherical.

Some important examples of spherical varieites and spherical homogeneous spaces are the following:
\begin{itemize}\itemsep=0pt
\item[(1)] When $G$ is a torus, the spherical $G$-varieties are exactly toric varieties.
\item[(2)] By the Bruhat decomposition the flag variety $G/B$ and the partial flag varieties $G/P$ are spherical $G$-varieties.
\item[(3)] Let $G \times G$ act on $G$ from left and right. Then the stabilizer of the identity is $G_{{\rm diag}} = \{(g, g) \,|\, g \in G\}$. Thus $G$ can be identified with the homogeneous space $(G \times G) / G_{{\rm diag}}$. Also by the Bruhat decomposition, this is a spherical $(G \times G)$-homogeneous space.
\item[(4)] Let $\mathcal{Q}$ be the set of all smooth quadrics in $\p^n$. The group $G = {\rm PGL}(n+1,\C)$ acts transitively on $\mathcal{Q}$.
The stabilizer of the quadric $x_0^2 + \cdots + x_n^2 = 0$ (in the homogeneous coordinates) is $H = {\rm PO}(n+1, \C)$ and hence $\mathcal{Q}$ can be identified with the
homogeneous space ${\rm PGL}(n+1, \C) / {\rm PO}(n+1, \C)$. The subgroup ${\rm PO}(n+1,\C)$ is the fixed point set of the involution $g \mapsto (g^t)^{-1}$ of $G$ and hence
$\mathcal{Q}$ is a symmetric homogeneous space. In particular, $\mathcal{Q}$ is spherical.
The homogeneous space $\mathcal{Q}$ plays an important role in classical enumerative geometry (see~\cite{DP}).
\end{itemize}

Throughout the rest of the paper we will fix a spherical homogeneous space $G/H$.

\section{Good compactification theorem} \label{sec-good- compactification}
The ring of conditions of $X=G/H$ is a version of the Chow ring for a (usually not complete) spherical homogenous space~$X$. The existence of a good compactification plays a crucial role in this intersection theory. One can define the ring of conditions more geometrically considering algebraic cycles as stratified analytic varieties and using cohomology rings instead of Chow rings. In this section we recall preliminaries on transversal intersections of stratified varieties and will state the theorem on existence of a good compactification.

Let $Y$ be an algebraic variety. A {\it stratification} of $Y$ is a decomposition of $Y$ into a disjoint union of smooth algebraic subvarieties (possibly of different dimensions). Each algebraic variety~$Y$ admits the following canonical stratification: Let $Y=Y_0\supset Y_1\supset\dots \supset \varnothing$ be the decreasing set of subvarieties where $Y_{i+1}$ is the set of singular points in $Y_i$. The {\it canonical stratification} of~$Y$ is the partition $Y=\bigcup_i Y_i^0$ where $Y_i^0=Y_i\setminus Y_{i+1}.$

Two subvarieties $Y,Z$ of an ambient smooth variety $X$ are {\it transversal} if there are stratifications $Y=\bigcup_i Y_i^0$, $Z=\bigcup Z_j^0$ such that any pair $Y_i^0$, $Z_j^0$ of smooth subvarieties are transversal in~$X$. Similarly one can define transversality of several subvarieties.

Let $X = G/H$ be a homogeneous space and $Y$, $Z$ smooth subvarieties. The Kleiman transversality theorem (which is a version of the famous Thom transversality theorem) states that: \emph{for almost all $g\in G$ the varieties $Y$ and $g \cdot Z$ are transversal in $X$, i.e., the subset $G^0\subset G$ such that the subvarieties $Y$ and $gZ$ are transversal in $X$ contains a nonempty Zariski open in $G$}.

The next important result is due to De Concini and Procesi~\cite{DP}. They considered the case when $G/H$ is a symmetric homogeneous space but their proof, more or less without change, works for a spherical homogeneous space as well.
\begin{Th}[existence of good compactification] \label{th-exist-good-comp}
Let $X=G/H$ be a spherical homogeneous space. Let $Y \subset X$ be a subvariety. Then there exists a complete $G$-variety $M$ that contains $X$ as its open $G$-orbit and for each $G$-orbit $O\subset M$ intersecting $\overline{Y} \subset M$ we have $\codim(\overline{Y}\cap O) = \codim(O) + \codim(Y)$.
\end{Th}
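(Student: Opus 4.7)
The plan is to produce $M$ in two stages: first construct a preliminary smooth toroidal $G$-equivariant compactification of $X=G/H$ in which the orbit structure is combinatorially visible, and then refine it equivariantly until the closure of $Y$ meets every orbit with the expected codimension.

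For the first stage, Luna--Vust theory classifies $G$-equivariant embeddings of $X$ by colored fans in the valuation cone of $X$. Starting from any complete colored fan and applying an equivariant resolution (either via subdivision of cones or via an equivariant version of Hironaka), we obtain a smooth complete $\bar{M}_0 \supset X$ that is \emph{toroidal}: the boundary $\bar{M}_0 \setminus X$ is a simple normal crossings divisor whose irreducible components $D_1,\dots,D_r$ are the $G$-invariant prime divisors, and every $G$-orbit is a locally closed stratum of the form $\bigcap_{i \in I} D_i \setminus \bigcup_{j \notin I} D_j$. In particular, every $G$-orbit closure is smooth and the collection of orbit closures is mutually transverse.

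For the second stage, let $\bar{Y}_0$ be the closure of $Y$ in $\bar{M}_0$. If the codimension formula already holds for every orbit meeting $\bar{Y}_0$, we take $M=\bar{M}_0$. Otherwise there is an orbit $O$ at which $\bar{Y}_0\cap O$ has excess dimension, and we blow up $\bar{M}_0$ along $\overline{O}$ (or along a suitable $G$-stable subscheme supported on it). Because the center is $G$-invariant and the blow-up corresponds to a subdivision of the underlying colored fan, the result is again a smooth toroidal completion of $X$; one replaces $\bar{Y}_0$ by its strict transform and iterates. The desired $M$ appears at the end of the tower.

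The main obstacle is termination of this procedure together with control of the new orbits created by each blow-up. The essential tool is Brion's local structure theorem for toroidal spherical embeddings, which says that an étale (or formal) neighborhood of any orbit is $G$-equivariantly isomorphic to an open piece of that orbit times a toric variety transverse to it. This reduces the combinatorial heart of the problem to its toric analogue: given a closed subvariety $Z$ of a torus, one can subdivide a torus fan so that the closure of $Z$ meets every torus orbit properly (embedded toric resolution). Transferring this back to $\bar{M}_0$ via the local model and the colored-fan dictionary refines $\bar{M}_0$ to a completion $M$ in which the codimension identity $\codim(\overline{Y}\cap O) = \codim(O) + \codim(Y)$ holds for every orbit $O$ meeting $\overline{Y}$, proving the theorem.
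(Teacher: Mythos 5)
A preliminary remark: the paper does not prove Theorem~\ref{th-exist-good-comp} at all; it quotes it from De Concini--Procesi \cite{DP}, noting only that their argument for symmetric spaces goes through for spherical homogeneous spaces. So your proposal is being measured against that argument rather than against a proof printed in the paper.

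Your outline assembles the right ingredients (toroidal completions via Luna--Vust theory, the local structure theorem, reduction to a torus problem), but it has a genuine gap exactly where you flag ``the main obstacle'': nothing in the argument shows that the blow-up tower terminates, and no invariant is produced that improves at each step. Each blow-up along an orbit closure creates new $G$-orbits (the strata of the exceptional divisor), and a priori the strict transform of $\overline{Y}$ can meet these new orbits in excess dimension again, so the procedure could run forever. The toric statement you invoke to close this -- that for a closed subvariety $Z$ of a torus one can refine a complete fan so that $\overline{Z}$ meets every orbit properly -- is not ``embedded toric resolution'' (which concerns resolving singularities of $Z$); it is the tropical compactification theorem, itself a nontrivial result whose proof rests on knowing that the set of invariant valuations whose center on $\overline{Z}$ has excess dimension is a polyhedral set of dimension at most $\dim Z$ (Bieri--Groves, Tevelev). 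Citing it without proof relocates the entire difficulty rather than resolving it.

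A second gap is the transfer from the toric slice back to $G/H$. The local structure theorem gives a chart of the form $P_u\times Z$ with $Z$ an affine toric variety for a quotient of $T$, but $Y$ is an arbitrary subvariety of $G/H$ and its trace on this chart is not of the form $P_u\times Z'$, so the problem does not literally become one about a subvariety of a torus inside a toric variety. The De Concini--Procesi argument handles this point directly: for $y\in Y$ and one-parameter subgroups $\lambda$ corresponding to points of the valuation cone, one studies where the limits $\lim_{t\to 0}\lambda(t)\cdot y$ land and shows that the locus of $\lambda$ producing excess intersection with a given stratum is a finite union of convex cones of controlled dimension; choosing the (colorless) fan of $M$ compatible with these cones then yields the good compactification in a single step, with no iteration and hence no termination question. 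That description of the ``bad'' subset of the valuation cone, together with its dimension bound, is the missing idea in your write-up.
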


Let us say that two subvarieties $Y$, $Z$ of $X$ are {\it strongly transversal} in $X$ if there is a complete $G$-variety $M\supset X$ such that: (1) $M$ is a good compactification for $Y$ and $Z$ and (2) for each orbit $O\subset M$ the intersections $\overline Y\cap O$ and $\overline Z\cap O$ of their closures $\overline Y$ and $\overline Z$ with $O$ are transversal in $O$. Similarly, one defines strong transversality of several subvarieties.

Using Kleiman's theorem and the good compactification theorem one can show for any subvarieties $Y,Z\subset X$ there is a Zariski open subset $G_0\subset G$ such that, for any $g \in G_0$, the varieties~$Y$ and $g \cdot Z$ are strongly transversal in~$X$.

\section[Ring of conditions of $G/H$]{Ring of conditions of $\boldsymbol{G/H}$} \label{sec-ring-of-conditions}

In this section we consider a variant of intersection theory for a spherical homogeneous space $G/H$ (which is often a non-complete variety) called the {\it ring of conditions} $\mathcal{R}(G/H)$. Similar to the Chow ring, the elements of $\mathcal{R}(G/H)$ are formal linear combinations of subvarieties in $G/H$ but considered up to a different and stronger equivalence. The definition of ring of conditions goes back to De Concini and Procesi in their fundamental paper \cite{DP}. They introduced it as a~natural ring in which one can study many classical problems from enumerative geometry (this is related to Hilbert's Fifteenth problem). They also showed that~$\mathcal{R}(G/H)$ can be realized as a~direct limit of Chow rings of all smooth equivariant compactifications of~$G/H$.

Consider the set $\mathcal{C}$ of {\it algebraic cycles} in~$G/H$. That is, every element of $\mathcal{C}$ is a formal linear combination $V = \sum_i a_i V_i$ with $a_i \in \Z$ and $V_i$ irreducible subvarieties. Clearly, with the formal addition operation of cycles, $\mathcal{C}$ is an abelian group. If all the subvarieties $V_i$ in $V$ have the same dimension $k$ we say that~$V$ is a~$k$-cycle. For $0 \leq k \leq n$, the subgroup of $k$-cycles is denoted by~$\mathcal{C}_k$. For a cycle $V = \sum_i a_i V_i$ and $g \in G$ we define $g \cdot V$ to be $\sum_i a_i(g \cdot V_i)$. A $0$-cycle is just a~formal linear combination of points. If $P = \sum_i a_i P_i$ is a $0$-cycle where the $P_i$ are points, we let $|P| = \sum_i a_i$.

Let $Y$ and $Z$ be strongly transversal irreducible subvarieties, and let $Y \cap Z$ be a union of irreducible components $T$. We then define the {\it intersection product} $Y \cdot Z$ to be the cycle
\[
Y \cdot Z= \sum_T T.
\]
By linearity the intersection product can be extended to algebraic cycles $Y = \sum_i a_i Y_i$ and $Z = \sum_j b_j Z_j$ such that for all~$i$,~$j$ varieties $Y_i$ and~$Z_j$ are strongly transversal to each other.

We now define an equivalence relation $\sim$ on the set of algebraic cycles as follows. Let $V=\sum_i a_i V_i$, $V'=\sum_j b_j V'_j \in \mathcal{C}_k$ be algebraic cycles of dimension $k$. Let $Z$ be an irreducible sub\-va\-riety of complementary dimension $n - k$. One knows that for generic $g \in G$, the subvariety~$g \cdot Z$ intersects all the $V_i$ and $V'_j$ in a strongly transversal fashion. Thus, for generic $g \in G$, the intersection products $V \cdot (g \cdot Z)$ and $V' \cdot (g \cdot Z)$ are defined and are $0$-cycles. We say that $V \sim V'$ if
for any $Z$ as above and for generic $g \in G$ we have:
\begin{gather} 
 |V \cap (g \cdot Z)| = |V' \cap (g \cdot Z)|.
 \end{gather}
That is, $V \sim V'$ if they intersect general translates of any subvariety of complementary dimension at the same number of points.

One verifies using good compactifications that if $Y_1 \sim Y_2$ and $Z_1 \sim Z_2$ then for generic $g\in G$ the intersection products $Y_1 \cdot gZ_1 $ and $Y_2 \cdot gZ_2$ are equivalent to each other. Thus the intersection product of strongly transversal subvarieties induces an intersection operation on the quotient~$\mathcal{C} /{\sim}$.

\begin{Def} \label{def-ring-of-conditions} The {\it ring of conditions of $G/H$} is $\mathcal{C} /{\sim}$ with the ring structure coming from addition and intersection product of cycles.
\end{Def}

The following example, due to De Concini and Procesi, shows the assumption that $G/H$ is spherical is important and the ring of conditions is not well-defined for all homogeneous spaces.
\begin{Ex} \label{ex-ring-of-cond-not-well-def}Take the $3$-dimensional affine space $\C^3$ regarded as an additive group. We would like to show that the ring of conditions of $\C^3$ does not exist. By contradiction suppose that it exists. Consider the surface (quadric) $S$ in $\C^3$ defined by the equation $y = zx$. The intersection of a horizontal plane $z=a$ and $S$ is the line $y=ax$. Thus for almost all $a\in \C$ the lines $y=ax$, $z=a$ must be equivalent in the ring of conditions of $\C^3$. On the other hand we claim that two skew lines $\ell_1$ and $\ell_2$ cannot be equivalent. This is because one can find a $2$-dimensional plane~$P$ such that any translate of~$P$ intersects~$\ell_1$ but no translate of~$P$ intersects~$\ell_2$ unless it contains~$\ell_2$. The contradiction shows that the ring of conditions of~$\C^3$ is not well-defined.
\end{Ex}

\section[Ring of complete intersections of $G/H$]{Ring of complete intersections of $\boldsymbol{G/H}$} \label{sec-ring-of-complete-intersec}

In this section we propose an analogue of the ring of conditions constructed using only (non-degenerate) complete intersections in a spherical homogeneous space~$G/H$.

\begin{Def} \label{def-non-degenerate-complete-intersection} A {\it non-degenerate complete intersection} in $G/H$ is an intersection of several strongly transversal hypersurfaces $\Gamma_1,\ldots, \Gamma_k\subset G/H$.
\end{Def}

Consider the collection $\mathcal{C}'_k$ of all $k$-dimensional {\it complete intersection cycles}, that is, formal linear combinations $\sum_i a_i V_i$ where each $V_i$ is a $k$-dimensional non-degenerate complete intersection in $G/H$. We let $\mathcal{C}' = \bigoplus_{i=0}^n \mathcal{C}'_k$.

As in the construction of ring of conditions (Section \ref{sec-ring-of-conditions}) we define an equivalence relation $\sim$ on $\mathcal{C}'$ as follows: let $V$, $V' \in \mathcal{C}'_k$ be complete intersection $k$-cycles. We say that $V \sim V'$ if for any complete intersection $(n-k)$-cycle $Z$ and generic $g \in G$ we have
$|V \cap (g \cdot Z)| = |V' \cap (g \cdot Z)|$ (recall $n = \dim(G/H)$).
Note that in defining $\sim$ we only use $Z$ that are complete intersections.

\begin{Def} \label{def-ring-of-complete-intersections}
The {\it ring of complete intersections $\mathcal{R}'(G/H)$} is
$\mathcal{C}' /{\sim}$ with the ring structure coming from addition and intersection product of complete intersection cycles.
\end{Def}

In the same fashion as for the ring of conditions, it can be verified that the ring $\mathcal{R}'(G/H)$ is well-defined.
The inclusion $\mathcal{C}' \subset \mathcal{C}$ induces a ring homomorphism $\mathcal{R}'(G/H) \to \mathcal{R}(G/H)$. In general, this homomorphism is neither injective nor surjective.

In Section \ref{sec-descp-ring-comp-int} we will give a description of $\mathcal{R}'(G/H)$ in terms of volumes of polytopes.

\section{Newton--Okounkov polytopes for spherical varieties} \label{sec-NO-polytope-sph-var}
In this section we recall the notion of a Newton--Okounkov polytope associated to a $G$-linear system on a spherical homogeneous space $G/H$. The volume of this polytope gives the self-intersection number of the linear system.

Let $X$ be an $n$-dimensional projective spherical $G$-variety with a $G$-linearized very ample line bundle ${\mathcal L}$. One associates a convex polytope $\Delta(X, {\mathcal L})$ to $(X, {\mathcal L})$. The construction depends on the combinatorial choice of a {\it reduced word decomposition} $\w$ for the longest element $w_0$ in the Weyl group of~$G$.

A main property of the Newton--Okounkov polytope $\Delta(X, {\mathcal L})$ is that its volume gives a~formula for the self-intersection number of divisor class of ${\mathcal L}$. Namely,
\begin{gather} \label{equ-deg}
c_1({\mathcal L})^n = n! \vol_n(\Delta(X, {\mathcal L})).
\end{gather}
The formula \eqref{equ-deg} is equivalent to the Brion--Kazarnovskii formula for the degree of a projective spherical variety (see \cite[Theorem~2.5]{Kaveh-note} as well as \cite{Kazarnovskii} and \cite{Brion}).

\begin{Rem} \label{rem-Brion-Kaz-original-statement}
The original versions of \eqref{equ-deg} in \cite{Kazarnovskii} and \cite{Brion}, do not express the answer as volume of a Newton--Okounkov polytope but rather as integral of a certain function over the {\it moment polytope} $\mu(X, {\mathcal L})$. The construction of $\Delta(X, {\mathcal L})$ appears in \cite{Okounkov-sph} and \cite{AB} motivated by a question of the second author.
\end{Rem}

\begin{Rem} \label{rem-not-normal}
If $X$ is not normal the polytope $\Delta(X, {\mathcal L})$ can still be defined using the integral closure of the ring of sections of ${\mathcal L}$ (in its field of fractions), see \cite[Section~6.2]{KKh-reductive}.
\end{Rem}

More generally, let $X$ be a not-necessarily-projective spherical variety and let $E$ be a $G$-linear system on~$X$, that is, $E$ is a finite-dimensional $G$-invariant subspace of $H^0(X, {\mathcal L})$ for a~$G$-linearized line bundle ${\mathcal L}$ on~$X$. Extending the notion of intersection number of divisors on complete varieties, one can talk about intersection number of linear systems. Let $E_1, \ldots, E_n$ be linear systems on $X$. The intersection index $[E_1, \ldots , E_n]$ is defined to be the number of solutions in~$X$ of a generic system of equations $f_1(x) = \cdots = f_n(x) = 0$, where $f_i \in E_i$. When counting the solutions, we ignore the solutions $x$ at which all the sections in some $E_i$ vanish. An important property of the intersection index is multi-additivity with respect to product of linear systems. In \cite[Section~6]{KKh-reductive} the authors define the Newton--Okounkov polytope $\Delta(E)$ and prove the following.

\begin{Prop} \label{prop-Brion-Kaz-linear-system}
The intersection index $[E, \ldots, E]$ is equal to $n! \vol_n(\Delta(E))$.
\end{Prop}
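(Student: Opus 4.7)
The plan is to reduce Proposition~\ref{prop-Brion-Kaz-linear-system} to formula~\eqref{equ-deg} by replacing the possibly non-projective pair $(X,E)$ with the projective spherical image of $X$ in $\p(E^*)$.

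First, I would form the $G$-equivariant rational map $\phi_E\colon X\ratmap \p(E^*)$ defined by $E$, and let $Y\subset \p(E^*)$ be the closure of its image. Since $\phi_E$ is $G$-equivariant and sends the open $B$-orbit of $X$ to a dense subset of $Y$, the variety $Y$ is again spherical (though possibly non-normal), and $\mathcal{L}_Y:=\mathcal{O}_{\p(E^*)}(1)|_Y$ is a very ample $G$-linearized line bundle. If $\phi_E$ is not generically finite, then both $[E,\dots,E]$ and $\vol_n(\Delta(E))$ vanish, so one may assume $\phi_E$ has finite generic degree $d=[\C(X):\phi_E^*\C(Y)]$.

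Second, I would evaluate the intersection index on the image. For generic $f_1,\dots,f_n\in E$ the corresponding hyperplanes $H_1,\dots,H_n\subset \p(E^*)$ meet $Y$ in $\deg_{\mathcal{L}_Y}(Y)=c_1(\mathcal{L}_Y)^n$ reduced points (by Bertini and Bezout on $Y$), each lifting to exactly $d$ points in $X$ off the ramification locus. Hence
\[
[E,\dots,E]=d\cdot c_1(\mathcal{L}_Y)^n.
\]
Applying~\eqref{equ-deg} to $(Y,\mathcal{L}_Y)$, using Remark~\ref{rem-not-normal} in case $Y$ is non-normal, converts the right-hand side into $d\cdot n!\,\vol_n(\Delta(Y,\mathcal{L}_Y))$.

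Third, and this is the main step, I would show
\[
\vol_n(\Delta(E))=d\cdot \vol_n(\Delta(Y,\mathcal{L}_Y)).
\]
Both polytopes arise from the same $\w$-valuation machinery applied to graded algebras, and the algebra $R_E=\bigoplus_k E^{(k)}$ is isomorphic via $\phi_E^*$ to the homogeneous coordinate ring of $Y$. The $\w$-valuation $v_X$ used to define $\Delta(E)$ restricts on the subfield $\phi_E^*\C(Y)\subset \C(X)$ to a valuation that, under this isomorphism, defines $\Delta(Y,\mathcal{L}_Y)$. Because both valuations have rank equal to the transcendence degree $n$ over the algebraically closed base $\C$, the residue field extension is trivial, and the fundamental equality for valuations under finite extensions forces the value lattice of $v_X$ to contain the value lattice of $v_X|_{\phi_E^*\C(Y)}$ with index exactly $d$. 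This lattice index is precisely the factor by which the corresponding Euclidean volumes scale, giving the displayed equality. Combining the three steps yields $[E,\dots,E]=n!\,\vol_n(\Delta(E))$.

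The main obstacle is the last step: one must verify that the reduced-word valuation on $X$ restricts to a full-rank valuation on $\phi_E^*\C(Y)$ realizing $\Delta(Y,\mathcal{L}_Y)$, and that the resulting lattice index coincides with the field-extension degree $d$. This comes down to choosing the underlying Bott--Samelson flag generically with respect to the ramification of $\phi_E$, after which the identity follows from the classical fundamental equality; everything else is a packaging of~\eqref{equ-deg} with a Bezout/projection computation.
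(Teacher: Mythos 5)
There is no proof of Proposition~\ref{prop-Brion-Kaz-linear-system} in the paper itself: the statement is imported from \cite[Section~6]{KKh-reductive}, and the only ingredient the paper records is the identity $\Delta(E)=\Delta(Y_E,{\mathcal L})$ for very ample $E$, which together with \eqref{equ-deg} and Remark~\ref{rem-not-normal} disposes of the case $d=1$ of your reduction. So your first two steps (pass to the closure $Y$ of the image of the Kodaira map, compute $[E,\dots,E]=d\cdot c_1({\mathcal L}_Y)^n$, and apply \eqref{equ-deg} to $(Y,{\mathcal L}_Y)$) are sound and consistent with what the paper asserts; the only genuinely new content of your argument is the third step, the case where $\phi_E$ is generically finite of degree $d>1$.

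That third step contains a real gap. You invoke ``the fundamental equality for valuations under finite extensions'' to conclude that the value group of $v_X$ restricted to $\phi_E^*\C(Y)$ has index exactly $d$ in that of $v_X$. But the fundamental identity reads $\sum_i e_if_i=d$ where the sum runs over \emph{all} extensions of $v_X|_{\phi_E^*\C(Y)}$ to $\C(X)$; for the single extension $v_X$ it only gives $e_1\le d$, with equality precisely when $v_X$ is the \emph{unique} extension of its restriction, i.e., when the covering is totally ramified along the flag defining the valuation. (Trivial residue extensions do not help: they just force $f_i=1$ for every $i$, so $\sum_i e_i=d$.) Your proposed remedy --- choosing the Bott--Samelson flag \emph{generically} with respect to the ramification of $\phi_E$ --- makes matters worse, not better: at a generic point the degree-$d$ covering splits into $d$ unramified branches, so the restricted valuation has $d$ distinct extensions each with $e_i=1$ and the lattice index is $1$, giving $\vol_n(\Delta(E))=\vol_n(\Delta(Y,{\mathcal L}_Y))$ rather than $d\cdot\vol_n(\Delta(Y,{\mathcal L}_Y))$. (Compare the one-dimensional model $\C(x)\supset\C\big(x^2\big)$: the valuation $\operatorname{ord}_{x=0}$ restricts with index $2$, but $\operatorname{ord}_{x=a}$ for generic $a$ restricts with index $1$.) What your argument actually requires is the opposite of genericity: a proof that the specific $\w$-valuation is totally ramified over its restriction to $\phi_E^*\C(Y)$ for every $G$-linear system $E$, and nothing in the proposal supplies that. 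As it stands, the degree-$d$ case is not established; one would either have to prove this total-ramification statement, or bypass it (e.g., by reducing to very ample systems via multiplication by an auxiliary very ample $G$-linear system and multi-additivity, or by falling back on the general Newton--Okounkov machinery of \cite{KKh-Annals} and then identifying the abstract body with $\Delta(E)$, which is essentially what \cite{KKh-reductive} does).
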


From the polarization formula in linear algebra the following readliy follows.
\begin{Cor} \label{prop-int-number-several-divisors}
Let ${\mathcal L}_1, \ldots, {\mathcal L}_n$ be $G$-linearized very ample line bundles on a spherical variety~$X$ and take $G$-invariant linear systems $E_i \subset H^0(X, {\mathcal L}_i)$. For any subset $I \subset \{1, \ldots, n\}$ let $\Delta_I = \Delta\big(\prod_{i \in I} E_i\big)$. We have the following formula for the intersection index of the~$E_i$:
\begin{gather*}
(-1)^n [E_1, \ldots, E_n] = - \sum_{i} \vol_n(\Delta_i) + \sum_{i < j} \vol_n(\Delta_{i,j}) + \cdots
+ (-1)^n \vol_n(\Delta_{1, \ldots, n}).
\end{gather*}
\end{Cor}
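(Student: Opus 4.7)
The plan is to derive the identity by applying the classical polarization formula for symmetric multilinear forms, using Proposition \ref{prop-Brion-Kaz-linear-system} to convert the diagonal values into volumes of Newton--Okounkov polytopes. The two inputs needed are both recorded above: the intersection index $(E_1,\ldots,E_n) \mapsto [E_1,\ldots,E_n]$ is symmetric in its arguments and multi-additive with respect to the product of $G$-linear systems, and on the diagonal we have $[E,\ldots,E] = n!\,\vol_n(\Delta(E))$.

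First, I would regard the function $F(E) := [E, E, \ldots, E]$ as a ``degree-$n$ form'' on the commutative monoid of $G$-linear systems (with product as the monoid operation). By multi-additivity and symmetry, for any $G$-linear systems $E_1, \ldots, E_n$ the expansion
\[
F\bigl(E_{i_1} \cdots E_{i_k}\bigr) \;=\; \sum_{(j_1, \ldots, j_n) \in \{i_1,\ldots,i_k\}^n} [E_{j_1}, \ldots, E_{j_n}]
\]
holds, so the standard polarization argument (iterated finite differences, or equivalently M\"obius inversion on the Boolean lattice) yields the identity
\[
n!\,[E_1, \ldots, E_n] \;=\; \sum_{\emptyset \neq I \subseteq \{1, \ldots, n\}} (-1)^{n - |I|}\, F\Bigl(\prod_{i \in I} E_i\Bigr).
\]

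Next, I would apply Proposition \ref{prop-Brion-Kaz-linear-system} to each product linear system $\prod_{i \in I} E_i$, giving $F(\prod_{i \in I} E_i) = n!\,\vol_n(\Delta_I)$. Substituting, cancelling the factor $n!$, and multiplying through by $(-1)^n$, the sign on each term becomes $(-1)^{2n - |I|} = (-1)^{|I|}$, so
\[
(-1)^n [E_1, \ldots, E_n] \;=\; \sum_{\emptyset \neq I \subseteq \{1, \ldots, n\}} (-1)^{|I|}\, \vol_n(\Delta_I).
\]
Grouping the right-hand side by $|I| = 1, 2, \ldots, n$ reproduces the stated formula exactly.

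The only substantive point to check is that the product linear system $\prod_{i \in I} E_i \subset H^0(X, \bigotimes_{i \in I} \mathcal{L}_i)$ is still a $G$-linear system to which Proposition \ref{prop-Brion-Kaz-linear-system} applies, i.e.\ finite-dimensional and $G$-invariant; this is immediate since it is the image of the $G$-equivariant multiplication map out of the finite-dimensional $G$-module $\bigotimes_{i \in I} E_i$. Beyond this, no genuine difficulty arises: polarization is purely formal, so the main work in the corollary has already been done by Proposition \ref{prop-Brion-Kaz-linear-system}.
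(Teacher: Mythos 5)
Your proof is correct and follows exactly the route the paper intends: the paper derives this corollary in one line ``from the polarization formula in linear algebra,'' i.e., by polarizing the symmetric multi-additive intersection index and evaluating the diagonal terms via Proposition~\ref{prop-Brion-Kaz-linear-system}. You have simply written out the standard polarization identity and the sign bookkeeping in full, and your check that each product $\prod_{i\in I}E_i$ is again a finite-dimensional $G$-invariant linear system is the right (and only) point needing verification.
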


\begin{Rem} \label{rem-NO-body}The above notion of the Newton--Okounkov polytope of a $G$-linear system over a spherical variety is a special case of the more general notion of a~{\it Newton--Okounkov body} of a~graded linear system on an arbitrary variety (see \cite{KKh-Annals, LM} and the references therein).
\end{Rem}

Suppose $E$ is a very ample $G$-linear system on $X$. That is, the Kodaira map of $E$ gives an embedding of $X$ into the dual projective space $\p(E^*)$. Let $Y_E$ denote the closure of the image of $X$ in $\p(E^*)$ and let ${\mathcal L}$ be the $G$-linearized line bundle on $Y_E$ induced by $\mathcal{O}(1)$ on $\p(E^*)$. One then has $\Delta(E) = \Delta(Y_E, {\mathcal L})$. We note that in general~$Y_E$ may not be normal, nevertheless one can still define the polytope $\Delta(Y_E, {\mathcal L})$ (see Remark~\ref{rem-not-normal}).

One can show that the map $E \mapsto \Delta(E)$ is piecewise additive in the following sense. Let $C$ be a~rational polyhedral cone generated by a finite number of $G$-linear systems $E_1, \ldots, E_s$. Then the there is a rational polyhedral cone $\tilde{C}$ (living in some appropriate Euclidean space) and a~linear projection $\pi\colon \tilde{C} \to C$ such that for each $G$-linear system $E \in C$ we have $\Delta(E) = \pi^{-1}(E)$. It is then not hard to see that there is a fan $\Sigma$ supported on~$C$ such that the map
$E \mapsto \Delta(E)$ is additive when restricted to each cone in $\Sigma$ (see \cite[Proposition~1.4]{KV}).

When the line bundles belong to a cone of the above fan in which the Newton--Okounkov polytope is additive, the formula for their intersection number in Corollary \ref{prop-int-number-several-divisors} can be simplified to the mixed volume of their corresponding Newton--Okounkov polytopes.
\begin{Cor} \label{cor-mixed-vol-Brion-Kaz}
Suppose $E_1, \ldots, E_n$ are $G$-linear systems which lie in a cone on which the map $E \mapsto \Delta(E)$ is additive. We then have
\begin{gather*} E_1 \cdots E_n = n! V(\Delta(E_1), \ldots, \Delta(E_n)),\end{gather*}
where $V$ denotes the mixed volume of polytopes in $n$-dimensional Euclidean space.
\end{Cor}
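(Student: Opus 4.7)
The plan is to derive the corollary directly from Corollary~\ref{prop-int-number-several-divisors} by invoking the additivity hypothesis and matching the resulting expression to the classical polarization formula for mixed volumes.

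First, I would unpack the additivity assumption: since $E_1,\dots,E_n$ all lie in a single cone on which $E\mapsto \Delta(E)$ is additive, and since the cone is closed under the semigroup operation of product of linear systems (which corresponds, via additivity, to Minkowski sum of polytopes), every partial product $\prod_{i\in I} E_i$ for $I\subset\{1,\dots,n\}$ still lies in this cone. Consequently
\begin{gather*}
\Delta_I \;=\; \Delta\Bigl(\prod_{i\in I} E_i\Bigr) \;=\; \sum_{i\in I} \Delta(E_i).
\end{gather*}

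Next, I would substitute this identity into the formula of Corollary~\ref{prop-int-number-several-divisors}, yielding
\begin{gather*}
(-1)^n [E_1,\dots,E_n] \;=\; \sum_{\varnothing\neq I\subset\{1,\dots,n\}} (-1)^{|I|} \,\vol_n\!\Bigl(\sum_{i\in I} \Delta(E_i)\Bigr).
\end{gather*}
The right-hand side is (up to the overall sign $(-1)^n$) precisely the standard polarization expression for the mixed volume: the classical identity
\begin{gather*}
n!\, V(\Delta_1,\dots,\Delta_n) \;=\; \sum_{I\subset\{1,\dots,n\}} (-1)^{n-|I|}\,\vol_n\!\Bigl(\sum_{i\in I}\Delta_i\Bigr),
\end{gather*}
applied with $\Delta_i = \Delta(E_i)$, converts the alternating sum of volumes into $n!\,V(\Delta(E_1),\dots,\Delta(E_n))$. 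The term $I=\varnothing$ contributes zero because it is the volume of a point, so it is harmless to include or omit in either formula. Multiplying both sides by $(-1)^n$ then gives the desired equality $E_1\cdots E_n = n!\,V(\Delta(E_1),\dots,\Delta(E_n))$.

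The only real subtlety is the first step, namely the justification that the Newton--Okounkov polytopes of all $2^n$ partial products are honest Minkowski sums of the $\Delta(E_i)$; once this is in hand the argument is purely linear-algebraic. Everything else is the routine polarization identity for $\vol_n$ regarded as a homogeneous polynomial of degree $n$ on the cone of convex bodies under Minkowski addition, combined with the already-established Proposition~\ref{prop-Brion-Kaz-linear-system} which produces $n!\vol_n$ on the diagonal.
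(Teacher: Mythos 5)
Your proposal is correct and matches the paper's (one-line) argument in substance: the paper derives the corollary from Proposition~\ref{prop-Brion-Kaz-linear-system}, additivity of $E\mapsto\Delta(E)$ on the cone, and multi-linearity of mixed volume, which is exactly the polarization argument you spell out explicitly via Corollary~\ref{prop-int-number-several-divisors}. Your attention to the point that all partial products $\prod_{i\in I}E_i$ remain in the cone (so that $\Delta_I=\sum_{i\in I}\Delta(E_i)$) is the right subtlety to flag, and it holds because the cone is closed under the product of linear systems.
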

\begin{proof}The corollary immediately follows from Proposition~\ref{prop-Brion-Kaz-linear-system}, additivity of the Newton--Okounkov polytope and the multi-linearity of mixed volume.
\end{proof}

\section{A version of BKK theorem for ring of conditions}\label{sec-BKK-spherical}
Proposition \ref{prop-Brion-Kaz-linear-system} and Corollary~\ref{cor-mixed-vol-Brion-Kaz} express the intersection numbers of generic elements of $G$-linear systems on $X=G/H$ in terms of volumes/mixed volumes of certain (virtual) polytopes. In this section we will modify this so that it computes the intersection numbers of hypersurfaces in the ring of conditions of $X$.

We have to deal with the following two problems: (1) In general an algebraic hypersurface in~$X$ is not a section of a $G$-linearized line bundle on $X$. (Although in some cases, for example for $X=G=(\C^*)^n$, any hypersurface in~$X$ is indeed a section of some $G$-linearized line bundle on~$X$.) (2) One has to show that it is possible to make any given finite collection of hypersurfaces ``generic enough'' by moving its members via generic elements of the group~$G$.

Fortunately, it is known how to overcome the first problem: any hypersurface after multiplication by a~natural number $m$ becomes a section of a $G$-linearized line bundle. The second problem is also not complicated to solve (see below).

Let $G/H$ be a spherical homogeneous space of dimension~$n$ and let~$D$ be an effective divisor (i.e., a linear combination of prime divisors with nonnegative coefficients) in~$G/H$. Let ${\mathcal L} = \mathcal{O}(D)$ be the corresponding line bundle on~$G/H$. We would like to associate a Newton--Okounkov polytope to the divisor~$D$ such that the volume of this polytope gives the self-intersection number of $D$ in the ring of conditions of~$G/H$. To this end, we need to equip~${\mathcal L}$ with a $G$-linearization. The following results are well-known (see \cite{KKLV, Popov}).

\begin{Th} \label{th-exist-G-lin}
Let $X$ be a normal $G$-variety and let $\mathcal{L}$ be a line bundle on~$X$. Then there is $m>0$ such that $\mathcal{L}^{\otimes m}$ has a $G$-linearization.
\end{Th}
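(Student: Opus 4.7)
The strategy is to convert the linearization problem into a question about the Picard group of $G\times X$ and then kill obstructions with finiteness results. Let $\sigma, p\colon G\times X\to X$ denote the action map and the projection onto the second factor, and set
\begin{equation*}
\mathcal{N} \;:=\; \sigma^{*}\mathcal{L}\otimes p^{*}\mathcal{L}^{-1} \;\in\; \mathrm{Pic}(G\times X).
\end{equation*}
By construction $\mathcal{N}$ is trivial on $\{e\}\times X$. A $G$-linearization of $\mathcal{L}$ is, by definition, a trivialization of $\mathcal{N}$ whose restriction to $\{e\}\times X$ is the identity and which satisfies the cocycle condition on $G\times G\times X$. The theorem is thus reduced to finding $m>0$ such that $\mathcal{N}^{\otimes m}$ admits such a trivialization.

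First I would invoke the standard structure theorem for the Picard group of a product with a connected algebraic group: when $G$ is connected and $X$ is normal, $\mathrm{Pic}(G\times X)$ splits as a direct sum of $\mathrm{Pic}(G)$, $\mathrm{Pic}(X)$, and a third summand coming from the pairing between characters of $G$ and invertible regular functions on~$X$. Since $\mathcal{N}$ is trivial on $\{e\}\times X$, its $\mathrm{Pic}(X)$-component automatically vanishes, so $\mathcal{N}$ is determined by a class in $\mathrm{Pic}(G)$ together with a ``character-valued'' piece.

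Next I would annihilate these two remaining summands by taking an appropriate power. For a connected reductive group $G$ the Picard group $\mathrm{Pic}(G)$ is a finite abelian group (this is the point where reductivity enters in an essential way), so some power kills the $\mathrm{Pic}(G)$-component; the character-valued piece lives in a group built out of the finitely generated lattice $\widehat{G}$, and it too can be cleared by a finite exponent. Consequently some $m>0$ trivializes $\mathcal{N}^{\otimes m}$ as an abstract line bundle, and after rescaling the chosen trivialization by a nowhere vanishing regular function pulled back from $X$ (which is constant along $X$ because $X$ is normal), one obtains an isomorphism $\Phi\colon \sigma^{*}\mathcal{L}^{\otimes m}\xrightarrow{\sim} p^{*}\mathcal{L}^{\otimes m}$ that restricts to the identity on $\{e\}\times X$.

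The last step is the cocycle condition $(m_{G}\times\mathrm{id})^{*}\Phi = p_{23}^{*}\Phi\circ(\mathrm{id}\times\sigma)^{*}\Phi$, where $m_{G}\colon G\times G\to G$ is multiplication. The failure of $\Phi$ to satisfy this identity is measured by a regular unit $c$ on $G\times G\times X$ that vanishes at $(e,e)$; by a Rosenlicht-type argument using connectedness of $G$ and normality of $X$, $c$ factors through a $2$-cocycle on $G\times G$ taking values in $\C^{*}$, which can be cleared by a further finite power of $\mathcal{L}$. I expect this cocycle-adjustment to be the main obstacle, since one must verify both that the defect descends and that it is of finite order in an appropriate cohomology group; the Picard-group bookkeeping, once set up, then delivers the theorem, as in the KKLV and Popov treatments cited by the authors.
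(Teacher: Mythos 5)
The paper offers no proof of this statement to compare against: it is quoted as a well-known result with pointers to \cite{KKLV} and \cite{Popov}. Your outline is the standard argument from those sources --- reduce to trivializing $\mathcal{N}=\sigma^{*}\mathcal{L}\otimes p^{*}\mathcal{L}^{-1}$, analyze $\mathrm{Pic}(G\times X)$ using normality of $X$ and connectedness of $G$, and kill the residual obstruction in the finite group $\mathrm{Pic}(G)$ --- and that architecture is correct.

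That said, both of your ``clear it by a finite exponent'' steps beyond $\mathrm{Pic}(G)$ are unjustified as written, and each would actually \emph{fail} if the group in question were what you describe. First, the character lattice $\widehat{G}$ is free abelian, so a nonzero obstruction living in a group ``built out of'' it can never be removed by replacing $\mathcal{L}$ with a power; the correct point is that this piece is not an obstruction at all. The characters enter through Rosenlicht's unit theorem $\mathcal{O}(G\times X)^{*}=\widehat{G}\cdot\mathcal{O}(X)^{*}$, where they parametrize the ambiguity in the trivialization (equivalently, the difference between two linearizations of the same bundle), not as a summand of the obstruction; the only genuine obstruction is the image of $\mathcal{L}$ in $\mathrm{Pic}(G)$, which is finite for \emph{any} connected linear algebraic group, so reductivity is not where finiteness comes from. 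Second, for the cocycle condition: a regular unit cannot ``vanish at $(e,e)$'' (you mean it equals $1$ there), and once $\Phi$ is normalized to be the identity on $\{e\}\times X$ by dividing by the pullback of a unit from $X$ (a step that has nothing to do with normality, and such units need not be constant), Rosenlicht's theorem on $G\times G\times X$ shows the defect $c$ factors as $a(g)b(h)u(x)$, and the normalizations at $g=e$ and at $h=e$ force all three factors to be the constant $1$. So no further power is needed there --- and had the defect been a nontrivial class in a torsion-free $H^{2}$, as you seem to allow, no finite power would have removed it. With these two steps repaired, your argument is exactly the one in \cite{KKLV}.
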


Alternatively, there exists a finite covering $\tilde{G} \to G$ where $\tilde{G}$ is a connected reductive group with trivial Picard group. Moreover, after replacing $G$ with $\tilde{G}$, every line bundle on $X$ admits a linearization (see remark after Proposition 2.4 and Proposition 4.6 in \cite{KKLV}). Thus, every hypersurface is a section of a linearizable line bundle.

\begin{Th} \label{th-G-lin-G/H}
The assignment $\chi \mapsto \mathcal{L}_\chi$ gives a one-to-one correspondence between characters of $H$ and $G$-linearized line bundles $\mathcal{L}$ on $G/H$.
\end{Th}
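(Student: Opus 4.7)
The plan is to construct the assignment $\chi \mapsto \mathcal{L}_\chi$ explicitly via the associated bundle construction and then to produce an inverse by restricting a $G$-linearized line bundle to the fibre over the base point $eH \in G/H$.

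First I would construct the forward map. Given a character $\chi\colon H \to \C^*$, let $H$ act on $G \times \C$ on the right by $h \cdot (g, t) = \big(gh^{-1}, \chi(h) t\big)$. The geometric quotient
\[ \mathcal{L}_\chi = (G \times \C) / H \]
exists as an algebraic variety, the projection to the first factor descends to a morphism $\mathcal{L}_\chi \to G/H$, and this map acquires the structure of an algebraic line bundle from the fact that $G \to G/H$ is a principal $H$-bundle that is locally trivial in a sufficiently fine topology. Left multiplication by $G$ on the first factor commutes with the $H$-action, descends to $\mathcal{L}_\chi$, and lifts the action on $G/H$; this yields a canonical $G$-linearization of $\mathcal{L}_\chi$.

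For the inverse map, let $\mathcal{L}$ be any $G$-linearized line bundle on $G/H$. Since $H$ is the isotropy group of $eH$, it acts linearly on the one-dimensional fibre $\mathcal{L}|_{eH}$, giving a character $\chi_\mathcal{L} \in \operatorname{Hom}(H, \C^*)$. To show the two assignments are mutually inverse, I would fix a nonzero $v \in \mathcal{L}|_{eH}$ and define a morphism $\mathcal{L}_{\chi_\mathcal{L}} \to \mathcal{L}$ by sending the class of $(g, t)$ to $t \cdot (g \cdot v) \in \mathcal{L}|_{gH}$. The defining equivalence $\big(gh^{-1}, \chi_\mathcal{L}(h) t\big) \sim (g, t)$ is precisely what makes this well defined, and one checks directly that it is a $G$-equivariant isomorphism of line bundles. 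Injectivity of $\chi \mapsto \mathcal{L}_\chi$ is then automatic: any $G$-equivariant isomorphism $\mathcal{L}_{\chi} \cong \mathcal{L}_{\chi'}$ restricts to an $H$-equivariant isomorphism of fibres at $eH$, which forces $\chi = \chi'$.

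The main technical obstacle is verifying that the quotient $(G \times \C)/H$ genuinely exists as an algebraic line bundle, rather than merely as a set-theoretic or complex-analytic object; this reduces to the standard fact that $G \to G/H$ is a principal $H$-bundle locally trivial in the \'etale topology, which is enough to descend the line bundle structure. Once this foundational point is settled, every other step is a formal unwinding of definitions, and the compatibility of the two assignments with the group operation on $\operatorname{Hom}(H, \C^*)$ and with tensor products of linearized line bundles is immediate, so the correspondence is actually a group isomorphism.
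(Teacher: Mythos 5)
Your proposal is correct, but note that the paper itself offers no proof of this statement: it is quoted as a well-known result with references to Knop--Kraft--Luna--Vust and Popov, so there is nothing internal to compare against. What you have written is precisely the standard argument from those sources: the associated-bundle construction $\mathcal{L}_\chi = \big(G \times \C\big)/H$ for the forward map, the $H$-action on the fibre $\mathcal{L}|_{eH}$ for the inverse, and the explicit $G$-equivariant isomorphism $[(g,t)] \mapsto t\cdot(g\cdot v)$ to show the two are mutually inverse (the computation $gh^{-1}\cdot v = \chi(h)^{-1}(g\cdot v)$ is exactly what makes this well defined, and the identity $h\cdot[(e,t)] = [(e,\chi(h)t)]$ gives $\chi_{\mathcal{L}_\chi}=\chi$ in the other direction). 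You also correctly isolate the only genuinely nontrivial point, namely that $(G\times\C)/H$ exists as an algebraic line bundle rather than merely as a set; \'etale-local triviality of the $H$-torsor $G \to G/H$ together with descent for affine morphisms settles this, and this is how the cited references handle it. More conceptually, the whole statement is an instance of equivariant descent along $G \to G/H$: $G$-linearized vector bundles on $G/H$ correspond to finite-dimensional $H$-modules via the fibre at $eH$, and restricting to rank one gives the bijection with characters, compatibly with tensor products as you observe. No gaps.
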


Let $m>0$ be such that ${\mathcal L}^{\otimes m}$ has a $G$-linearization and fix a $G$-linearization for ${\mathcal L}^{\otimes m}$. Let $s_m \in H^0\big(G/H, {\mathcal L}^{\otimes m}\big)$ be the section defining $mD$, i.e., $mD = \operatorname{div}(s_m)$. Let $E_m \subset H^0\big(G/H, {\mathcal L}^{\otimes m}\big)$ be the $G$-invariant subspace generated by~$s_m$. Let us denote the Newton--Okounkov polytope associated to $E_m$ by $\Delta(mD)$ (see Section~\ref{sec-NO-polytope-sph-var}). We note that the linear system $E_m$ and the corresponding polytope $\Delta(mD)$ depend on the choice of linearization of~$G/H$. We put
\begin{gather*} \Delta(D) = (1/m) \Delta(mD).\end{gather*}

\begin{Th} \label{th-main}The self-intersection number of $D$ in the ring of conditions of~$G/H$ is \linebreak $n! \vol_n(\Delta(D))$.
\end{Th}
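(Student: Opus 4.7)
\emph{Step 1 (reduction to the linearized case).} Since the intersection product on $\mathcal R(G/H)$ is $\Z$-multilinear, $(mD)^n=m^n D^n$, and by the definition $\Delta(D):=(1/m)\Delta(mD)$ we have $\vol_n(\Delta(mD))=m^n\vol_n(\Delta(D))$. Hence the claim for $D$ is equivalent to the claim for $mD$, and I may assume that $\mathcal L$ itself is $G$-linearized, $D=\operatorname{div}(s)$ for a regular section $s\in H^0(G/H,\mathcal L)$, and $E$ is the finite-dimensional $G$-invariant subspace generated by $s$, so that $\Delta(E)=\Delta(D)$.

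\emph{Step 2 (from $D^n$ to translate counts).} Kleiman's theorem combined with Theorem~\ref{th-exist-good-comp} ensures that for generic $(g_1,\dots,g_{n-1})\in G^{n-1}$ the divisors $D,g_1 D,\dots,g_{n-1}D$ are strongly transversal in $G/H$. Each $g_iD$ is $G$-equivalent to $D$ in $\mathcal R(G/H)$, so
\begin{equation*}
D^n \;=\; \bigl|D\cap g_1D\cap\cdots\cap g_{n-1}D\bigr|,
\end{equation*}
which is the number of common zeros of the sections $s,g_1\cdot s,\dots,g_{n-1}\cdot s$ of $\mathcal L$ in $G/H$.

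\emph{Step 3 (main step: the translate count equals $[E,\dots,E]$).} I pass to a smooth complete spherical variety $\bar X\supset G/H$ that is simultaneously a good compactification for $D$, for its generic translates, and for the divisors of generic elements of $E$; such $\bar X$ is obtained by taking an equivariant resolution dominating the finitely many good compactifications provided by Theorem~\ref{th-exist-good-comp}. Extend $\mathcal L$ to a $G$-linearized line bundle $\bar{\mathcal L}$ on $\bar X$ (using Theorem~\ref{th-exist-G-lin} and the finite-cover variant if needed) so that $s$ extends to a regular section with $\operatorname{div}_{\bar X}(s)=\bar D+B$, where $\bar D$ is the closure of $D$ and $B$ is effective and supported on the boundary. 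Since every boundary divisor of a spherical compactification is $G$-invariant, every $g\in G$ fixes $B$ and
\begin{equation*}
\operatorname{div}_{\bar X}(g\cdot s)\;=\;g\bar D+B,
\end{equation*}
so all $g\bar D$ and, analogously, all closures $\bar D_f$ of divisors of $f\in E$ are linearly equivalent to $\bar D$ in $\bar X$. The good compactification property forces the intersection of any $n$ such generic divisors to be $0$-dimensional, transversal, and contained in the open orbit $G/H$ (a boundary orbit $O$ with $\dim O<n$ cannot carry a $0$-dimensional intersection of $n$ hypersurfaces in general position in $O$). Since $G/H$ is a single $G$-orbit, the base locus of the nonzero $G$-invariant linear system $E$ is empty, so no base-locus correction is needed for $[E,\dots,E]$. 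Computing in the Chow ring of $\bar X$, both
\begin{equation*}
\bigl|D\cap g_1D\cap\cdots\cap g_{n-1}D\bigr| \quad\text{and}\quad [E,\dots,E]
\end{equation*}
equal the self-intersection number $(\bar D)^n$, hence coincide.

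\emph{Step 4 (conclusion).} Proposition~\ref{prop-Brion-Kaz-linear-system} gives $[E,\dots,E]=n!\vol_n(\Delta(E))=n!\vol_n(\Delta(D))$, which combined with Steps~2 and~3 proves the theorem.

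\emph{Main obstacle.} Step~3 is the crux: one must compare the count of intersections of specific $G$-translates $g_i\cdot s$ with the intersection index $[E,\dots,E]$ defined via \emph{generic} $f_i\in E$. The $G$-orbit of $[s]$ in $\p(E)$ need not be Zariski-dense, so a direct density argument is unavailable. The fix is to reinterpret both counts as the single intersection number $(\bar D)^n$ in a common good compactification, exploiting the fact that $g\bar D$ and all $\bar D_f$ lie in the single linear equivalence class $c_1(\bar{\mathcal L})-[B]$.
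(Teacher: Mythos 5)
Your proposal is correct and follows essentially the same route as the paper's proof: reduce to the linearized case, use Kleiman transversality plus the good compactification theorem to identify $D^n$ with a transversal count of translates in a spherical compactification $\overline{X}$, observe that all the relevant closures are linearly equivalent (the connected group $G$ acts trivially on $\operatorname{Pic}(\overline{X})$), so both the translate count and $[E,\dots,E]$ equal $\overline{D}^{\,n}$, and finish with Proposition~\ref{prop-Brion-Kaz-linear-system}. Your Step~3 is somewhat more explicit than the paper about the boundary correction divisor $B$ and about why the non-density of the $G$-orbit of $[s]$ in $\p(E)$ is harmless, but the underlying argument is the same.
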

\begin{proof}
The theorem can be reduced to Proposition~\ref{prop-Brion-Kaz-linear-system} via Kleiman's transversality theorem and existence of a good compactification for~$D$ (Theorem~\ref{th-exist-good-comp}). Without loss of generality let $m=1$ and put $E=E_m$. Let $\overline{X}$ be a spherical embedding of $G/H$ which provides a good compactification for~$D$, i.e., for each $G$-orbit $O\subset \overline{X}$ such that $\overline{D}\cap O\neq \varnothing$ we have $\overline{D}\cap O$ is a hypersurface in~$O$. By Kleiman's transversality theorem if $g_1, \ldots, g_n \in G$ are in general position then for any $G$-orbit $W$ in $\overline{X}$ all the $\overline{g_i \cdot D}\cap W$ intersect strongly transversally in $W$. In particular if $W\neq G/H$ the intersections are empty. Thus the self-intersection number of~$D$ in the ring of conditions is equal to the self-intersection number of the divisors $\overline{g_i \cdot D}$ in $\overline{X}$.

On the other hand, we know the following: take $s_1, \ldots, s_n \in E$ and let $H_i = \{x \in G/H \,|\, s_i(x) \allowbreak = 0 \}$ with closure $\overline{H}_i \subset \overline{X}$. Suppose all the $\overline{H}_i$ intersect transversally. In particular, $\overline{H}_1 \cap \cdots \cap \overline{H}_n$ lies in $G/H$, that is, there are no intersection points at infinity. Then $[E, \ldots, E]$ is equal to $|H_1 \cap \cdots \cap H_n|$, the number of solutions $x \in G/H$ of the system $s_1(x) = \cdots = s_n(x) = 0$ (we note that since $E$ is $G$-invariant the set of common zeros of all the sections in~$E$ is empty).
It follows that if the $g_i \in G$ are in general position we have
\begin{gather*} [E, \ldots, E] = |\overline{g_1 \cdot D} \cap \cdots \cap \overline{g_n \cdot D}|.
\end{gather*}
Now since $G$ is connected, the action of $G$ on the Picard group of $\overline{X}$ is trivial. Hence for any $g \in G$, the divisor $\overline{g \cdot D}$ is linearly equivalent to~$\overline{D}$. Putting everything together we conclude that
\begin{gather*} [E, \ldots, E] = \overline{D}^n.\end{gather*}
 The theorem now follows from Proposition \ref{prop-Brion-Kaz-linear-system}.
\end{proof}

Let $D_1, \ldots, D_n$ be $n$ irreducible hypersurfaces in $G/H$. For each $i$, fix a $G$-linearization for the line bundle $\mathcal{O}(D_i)$. For each collection $I = \{i_1, \ldots, i_s\} \subset \{1, \ldots, n\}$ equip the line bundle $\mathcal{O}(D_{i_1} + \cdots + D_{i_s})$ with the $G$-linearization induced from those of the $D_i$. Let $\Delta_I$ be the Newton--Okounkov body $\Delta(D_{i_1} + \cdots + D_{i_s})$.
The following is an immediate corollary of Theorem \ref{th-main}.
\begin{Cor} \label{cor-main}
The intersection number of the $D_i$ in the ring of conditions is given by the following formula\begin{gather*}
(-1)^n D_1 \cdots D_n = - \sum_{i} \vol_n(\Delta_i) + \sum_{i < j} \vol_n(\Delta_{i,j}) + \cdots
+ (-1)^n \vol_n(\Delta_{1, \ldots, n}).
\end{gather*}
\end{Cor}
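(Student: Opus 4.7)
The plan is to deduce the corollary from Theorem~\ref{th-main} by the polarization identity for symmetric multilinear forms, in direct parallel with the way Corollary~\ref{prop-int-number-several-divisors} was deduced from Proposition~\ref{prop-Brion-Kaz-linear-system}. First I would note that the intersection product in the ring of conditions $\mathcal{R}(G/H)$ is $\Z$-bilinear by construction (Section~\ref{sec-ring-of-conditions}), so the $n$-fold product, restricted to classes of divisors, defines a symmetric $n$-linear form on this subgroup. Applying the standard polarization identity for such a form yields
\begin{gather*}
n!\, D_1\cdots D_n \;=\; \sum_{\varnothing\neq I\subset\{1,\ldots,n\}} (-1)^{n-|I|} \Bigl(\sum_{i\in I} D_i\Bigr)^{\!n},
\end{gather*}
where each $n$-th power is computed in $\mathcal{R}(G/H)$.

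Next, set $D_I=\sum_{i\in I}D_i$. Each $D_I$ is an effective divisor on $G/H$ (being a sum of irreducible hypersurfaces), and $\mathcal{O}(D_I)=\bigotimes_{i\in I}\mathcal{O}(D_i)$ carries the tensor-product $G$-linearization induced from the chosen $G$-linearizations of the $\mathcal{O}(D_i)$. Theorem~\ref{th-main} therefore applies to $D_I$ and gives
\begin{gather*}
(D_I)^n \;=\; n!\,\vol_n(\Delta_I),
\end{gather*}
where $\Delta_I=\Delta(D_I)$ is precisely the Newton--Okounkov polytope named in the statement of the corollary. Substituting these identities into the polarization formula above, dividing by $n!$, and then multiplying by $(-1)^n$ converts each sign $(-1)^{n-|I|}$ into $(-1)^{|I|}$, producing exactly the claimed inclusion--exclusion formula for $(-1)^n D_1\cdots D_n$.

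The only point that requires verification, rather than pure formalism, is that the Newton--Okounkov polytope obtained by applying the recipe of Section~\ref{sec-BKK-spherical} to the divisor $D_I$ (with the induced linearization) is indeed the same $\Delta_I$ used in the statement. This reduces to checking that for $m\gg 0$ the distinguished section of $\mathcal{O}(mD_I)$ defining $mD_I$ is the tensor product of the distinguished sections of the $\mathcal{O}(mD_i)$, and that its $G$-span provides the correct linear system $E_{m,I}$ so that $\Delta(mD_I)=m\Delta_I$. Once this compatibility between the $G$-linearizations and the Newton--Okounkov construction is recorded, the rest of the argument is purely formal: one polarization identity plus one invocation of Theorem~\ref{th-main} per nonempty subset $I$. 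I expect this bookkeeping to be the only mildly subtle step, since everything else is forced by multilinearity of the intersection pairing in $\mathcal{R}(G/H)$.
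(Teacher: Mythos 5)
Your proof is correct and matches the paper's intended argument: the paper gives no explicit proof, calling the corollary ``immediate'' from Theorem~\ref{th-main}, with the polarization identity being exactly the mechanism already used to pass from Proposition~\ref{prop-Brion-Kaz-linear-system} to Corollary~\ref{prop-int-number-several-divisors}. Your attention to the compatibility of the induced $G$-linearization on $\mathcal{O}\big(\sum_{i\in I}D_i\big)$ with the polytopes $\Delta_I$ is the right (and only) point needing care, and the paper's statement of the corollary sets up precisely that convention.
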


\section{Two algebra constructions} \label{sec-alg-construction} In this section we discuss two similar constructions of graded commutative algebras with Poincar\'{e} duality associated with a $\Q$-vector space $V$. The first construction uses a symmetric $n$-linear function $F$ on $V$. The second construction uses a homogeneous degree $n$ polynomial $P$ on $V$. The constructions produce isomorphic graded algebras with Poincar\'{e} duality if $F$ and $P$ satisfy $n! F(x,\dots,x)=P(x)$ for all $x \in V$. We will give the results without proofs as all the proofs are straightforward.
\subsection{Algebra constructed from a symmetric multi-linear function} Let $V$ be a (possibly infinite-dimensional)
vector space over $\Q$ equipped with a nonzero symmetric $n$-linear function $F\colon V \times \cdots \times V \to \Q$. To $(V, F)$ one can associate a graded $\Q$-algebra $A _F= A = \bigoplus_{i=0}^n A_i$ by the following construction. Let $\Sym(V)= \bigoplus_{i \geq 0} \Sym^i(V)$ be the symmetric algebra of the vector space $V$. With $F$ one can associate the function $F_s$ on $\Sym(V)$ as follows: (1)~if $a\in \Sym^n(V)$ and $a=\sum \lambda^iv_1^i\cdots v^i_n$ where $\lambda_i\in \Q$, $v^i_j\in V$ then $F_s(a)=\sum\lambda_i F\big(v_1^i, \ldots, v^i_n\big)$; (2)~if $a\in \Sym^k(V)$ where $k \neq n$ then $F_s(a)=0$. Let $I_{F_s}\subset \Sym(V)$ be the subset defined by $a\in I_{F_s}$ if and only if for any $b\in \Sym(V)$ the identity $F_s(ab)=0$ holds. It is easy to see that~$I_{F_s}$ is a homogeneous ideal in $\Sym(V)$. We define $A$ to be the graded algebra $\Sym(V) / I_{F_s}$. It follows from the construction that $A$ has the following properties:
\begin{itemize}\itemsep=0pt
\item $A_0=\Q$.
\item $A_k=0$ for $k>n$.
\item $\dim _{\Q}A_n=1$, moreover the function $F_s$ induces a linear isomorphism $f_s\colon A_n \rightarrow \Q$.
\item $A_1$ coincides with the image of $V=\Sym^1(V)$ in $A=\Sym(V)/I_{F_s}$.
\item $A$ is generated as an algebra by $A_0$ and $A_1$.
\item The pairing $B_k\colon A_k\times A_{n-k}\rightarrow \Q$ by formula $B_k(a_k, a_{n-k})=f_s(a_k a_{n-k})$ is non-degenerate. It provides a ``Poincar\'e duality'' on~$A$.
\end{itemize}

\subsection{Algebra constructed from a homogeneous polynomial}
Let $V$ be a (possibly infinite-dimensional) $\Q$-vector space equipped with a homogeneous deg\-ree~$n$ polynomial $P\colon V \to \Q$ (recall that a function $P\colon V \to \Q$ is a polynomial or a~homogeneous degree $n$ polynomial if its restriction to any finite-dimensional subspace $V_1\subset V$ is correspondingly a polynomial or a homogeneous degree~$n$ polynomial). To~$(V, P)$ one can associate a graded $\Q$-algebra $A_P = A = \bigoplus_{i=0}^n A_i$ as we explain below.

First we recall the algebra $\D = \D_V$ of constant coefficient differential operators on~$V$. For a~vector $v \in V$, let $L_v$ be the differentiation operator (Lie derivative) on the space of polynomial functions on $V$ defined as follows. Let $f$ be a polynomial function on~$V$, then
\begin{gather*} L_v(f)(x) = \lim_{t \to 0} \frac{f(x+tv) - f(x)}{t}.\end{gather*}
The algebra $\D$ is defined to be the subalgebra of the $\Q$-algebra of linear operators on the space of polynomials on~$V$ generated by the Lie derivatives~$L_v$ for all $v \in V$ and by multiplication by scalars $c \in \Q$. The algebra $\D$ is commutative since Lie derivatives against constant vector fields commute. The algebra $\D$ can be naturally identified with the symmetric algebra $\Sym(V)$. When $V \cong \Q^n$ is finite-dimensional, $\D$ can be realized as follows: Fix a basis for~$V$ and let $(x_1, \ldots, x_n)$ denote the coordinate functions with respect to this basis. Each element of $\D$ is then a polynomial expression, with constant coefficients, in the differential operators $\xi_1=\partial/\partial x_1, \ldots,\xi_n= \partial/\partial x_n$. That is,
\begin{gather*} \D =\bigg\{ f(\partial/\partial x_1, \ldots, \partial/\partial x_n) \,|\, f = \sum_{\alpha = (a_1, \ldots, a_n)} c_\alpha \xi_1^{a_1} \cdots \xi_n^{a_n} \in \Q[\xi_1, \ldots, \xi_n]\bigg\}.
\end{gather*}

Now let $I_P$ be the ideal of all differential operators $D \in \D$ such that $D (P) = 0$, i.e., those differential operators that annihilate $P$. We define $A_P$ to be the quotient algebra $\D / I_P$.
The algebra $A = A_P$ has a natural grading $A= \bigoplus_{i=0}^n A_i$ (this is because $I_P$ is a homogenous ideal). It follows from the construction that $A = A_P$ has the following properties:
\begin{itemize}\itemsep=0pt
\item $A_0=\Q$.
\item $A_k=0$ for $k>n$.
\item $\dim _{\Q}A_n=1$. Moreover $P$ defines a linear function on homogeneous order $n$ operators $D_n$ sending $D_n$ to the constant $D_n(P)$. It induces a linear isomorphism $A_n \to \Q$.
\item $A_1$ coincides with the image of $V$ in $A=\D / I_P$.
\item $A$ is generated as an algebra by $A_0$ and $A_1$.
\item One can define the pairing $B_k\colon A_k\times A_{n-k}\rightarrow \Q$ by $B_k(a_k, a_{n-k}) = D_k \circ D_{n-k}(P)$ where~$D_k$ and~$D_{n-k}$ are any preimages of the elements $a_k,a_{n-k}\in \D/I_P$ in~$D$. This pairing is well-defined since the preimages $D_k$ and $D_{n-k}$ are defined up to addition of elements from the ideal~$I_P$ and elements from~$I_P$ annihilate~$P$. It is easy to show that the pairing is non-degenerate and gives a ``Poincar\'e duality'' on $A$.
\end{itemize}

One has the following.
\begin{Th} \label{th-ring-of-diff-op}
Let $A = \bigoplus_{i=0}^n A_i$ be a graded algebra over $\Q$ with the following properties:
\begin{itemize}\itemsep=0pt
\item[$(1)$] $A_0=\Q$.
\item[$(2)$] There is a linear isomorphism $f\colon A_n \to \Q$.
\item[$(3)$] $A$ is generated as an algebra by $A_0$ and $A_1$.
\item[$(4)$] There is a linear projection $\pi\colon V\to A_1$ of a given $\Q$-vector space $V$ onto $A_1$.
\item[$(5)$] For any $0\leq k\leq n$ the pairing $B_k\colon A_k \times A_{n-k} \to \Q$ defined by $B_k(a_k,a_{n-k})=f(a_ka_{n-k})$ is non-degenerate.
\end{itemize}
Then $A$ can be described as follows:
\begin{itemize}\itemsep=0pt
\item[$(a)$] $A$ is isomorphic to the algebra associated to $(V, P)$ where $P$ is the homogeneous degree $n$ polynomial on $V$ defined by
\begin{gather*} P(x)=(1/n!) f\big(y^n\big),\end{gather*}
where $x\in V$ and $y=\pi(x)$.
\item[$(b)$] $A$ is isomorphic to the algebra associated to $(V, F)$ where~$F$ is the symmetric $n$-linear function on~$V$ equal to the polarization of the polynomial~$P$ from~$(a)$ multiplied by~$n!$.
\end{itemize}
\end{Th}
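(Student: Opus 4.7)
The plan is to exhibit a single surjective graded $\Q$-algebra homomorphism $\Phi\colon\Sym(V)\to A$ whose kernel coincides with $I_{F_s}$ (yielding~(b)) and also with $I_P$ (which, combined with~(b), yields~(a)). By the universal property of the symmetric algebra, $\pi\colon V\to A_1$ extends uniquely to $\Phi$, and properties~(3) and~(4) force $\Phi$ to be surjective. The bridge between the three descriptions is the polarization identity
\begin{gather*}
F(v_1,\ldots,v_n)=f\bigl(\pi(v_1)\cdots\pi(v_n)\bigr),
\end{gather*}
which I would obtain by setting $Q(x):=f(\pi(x)^n)$, expanding $Q(t_1v_1+\cdots+t_nv_n)$, and reading the coefficient of $t_1\cdots t_n$ in two ways: it equals both $n!\,Q_{\mathrm{pol}}(v_1,\ldots,v_n)$ by the standard polarization formula and $n!\,f(\pi(v_1)\cdots\pi(v_n))$ by direct computation, while $Q_{\mathrm{pol}}=n!\,P_{\mathrm{pol}}=F$ by the definitions in~(a) and~(b).

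For~(b) I compute $\ker\Phi$ degree by degree. In degree $n$, the identity above gives $f(\Phi(a))=F_s(a)$, so $a\in\ker\Phi$ iff $F_s(a)=0$ (using that $f$ is an isomorphism). For $a\in\Sym^k(V)$ with $k<n$, nondegeneracy of $B_k$ together with surjectivity of $\Phi$ show that $\Phi(a)=0$ iff $f(\Phi(ab))=F_s(ab)=0$ for every $b\in\Sym^{n-k}(V)$; since $F_s$ vanishes outside degree $n$, this is exactly the defining condition $a\in I_{F_s}$. For $k>n$, both $A_k$ and the degree-$k$ part of $I_{F_s}$ fill all of $\Sym^k(V)$. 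Hence $\ker\Phi=I_{F_s}$ and $A\cong A_F$.

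For~(a), it suffices to show $I_P=I_{F_s}$ as homogeneous ideals in $\D=\Sym(V)$. The standard identity $L_{v_1}\cdots L_{v_n}(P)=n!\,P_{\mathrm{pol}}(v_1,\ldots,v_n)=F(v_1,\ldots,v_n)$ gives $D(P)=F_s(D)$ for $D\in\Sym^n(V)$. For $D\in\Sym^k(V)$ with $k\le n$, the polynomial $D(P)$ is homogeneous of degree $n-k$ and hence vanishes iff $D'D(P)=0$ for every $D'\in\Sym^{n-k}(V)$; by commutativity of $\D$ and the degree-$n$ case this becomes $F_s(DD')=0$ for all such $D'$, which is the condition $D\in I_{F_s}$. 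For $k>n$ both ideals exhaust $\Sym^k(V)$. Consequently $A\cong A_P$.

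The principal technical step is the polarization identity of the first paragraph, which binds together the three descriptions of the top pairing on $A$; everything else is bookkeeping using non-degeneracy of the pairings $B_k$ and the fact that constant-coefficient differential operators on $V$ recover a homogeneous polynomial from its iterated directional derivatives.
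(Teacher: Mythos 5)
The paper states Theorem~\ref{th-ring-of-diff-op} without proof (``all the proofs are straightforward''), so there is no argument of the authors' to compare against; your proof is correct and is exactly the natural argument the authors leave implicit: the surjection $\Phi\colon \Sym(V)\to A$, the polarization identity $F(v_1,\ldots,v_n)=f(\pi(v_1)\cdots\pi(v_n))$, and the degree-by-degree identification $\ker\Phi=I_{F_s}=I_P$ via non-degeneracy of the pairings $B_k$. The only point worth making explicit is that $A$ must be commutative (as the section's framing of ``graded commutative algebras with Poincar\'e duality'' presumes) for $\Phi$ to factor through $\Sym(V)$.
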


A generalization of Theorem \ref{th-ring-of-diff-op} for commutative algebras $A$ with Poincar\'{e} duality that are not necessarily generated by~$A_0$ and~$A_1$ can be found in \cite{EstKhKaz}. Also some related material can be found in \cite[Exercise~21.7]{Eisenbud} and \cite{Kaveh-note}.

\section[A description of ring of complete intersections of $G/H$]{A description of ring of complete intersections of $\boldsymbol{G/H}$} \label{sec-descp-ring-comp-int}

This section contains the main result of the paper which gives descriptions of the ring of complete intersections of a spherical homogeneous space in terms of volumes of polytopes. It is an analogue of the description of ring of conditions of torus~$(\C^*)^n$ in~\cite{Kaz-Khov}.

Recall $\mathcal{R}'(G/H)$ denotes the ring of complete intersections where $G/H$ is a spherical homogeneous space. We use Theorem~\ref{th-ring-of-diff-op} to give two descriptions of the ring $\mathcal{R}'_\Q(G/H) = \mathcal{R}'(G/H) \otimes_{\Z} \Q$ in terms of volumes of polytopes. Let $\Div(G/H)$ be the group of divisors on~$G/H$ with $\Div_\Q(G/H) = \Div(G/H) \otimes_\Z \Q$ the corresponding $\Q$-vector space. For an effective divisor~$D$ in~$G/H$ consider the associated Newton--Okounkov polytope~$\Delta(D)$ introduced in Section~\ref{sec-BKK-spherical}.

\begin{Th}The ring $\mathcal{R}'_\Q(G/H)$ is isomorphic to the algebra associated to the vector space $V = \Div_\Q(G/H)$ and the $n$-linear function $F\colon V \times \cdots \times V \to \Q$ whose value on an $n$-tuple $D_1, \ldots, D_n$ of effective divisors in $G/H$ is given by
$$(-1)^n F(D_1, \ldots, D_n) = - \sum_{i} \vol_n(\Delta_i) + \sum_{i < j} \vol_n(\Delta_{i,j}) + \cdots
+ (-1)^n \vol_n(\Delta_{1, \ldots, n}).$$ Here for $I \subset \{1, \ldots, n\}$ we set $\Delta_I =\Delta\big(\sum_{i \in I} D_i\big)$. Moreover, if $D_1, \ldots, D_n$ lie in a cone in $\Div_\Q(G/H)$ such that $D \mapsto \Delta(D)$ is linear then $F(D_1, \ldots, D_n) = n! V(\Delta(D_1), \ldots, \Delta(D_n))$ where~$V$ is the mixed volume $($see Section~{\rm \ref{sec-NO-polytope-sph-var})}.
\end{Th}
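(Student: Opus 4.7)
The plan is to apply Theorem~\ref{th-ring-of-diff-op}(b) with $V = \Div_\Q(G/H)$ and the symmetric $n$-linear form $F$ as stated, exhibiting an isomorphism $\mathcal{R}'_\Q(G/H) \cong \Sym(V)/I_{F_s} = A_F$. The first step is to construct a ring homomorphism $\varphi\colon \Sym(V) \to \mathcal{R}'_\Q(G/H)$ that sends a divisor $D$ to its class in $\mathcal{R}'_1(G/H)_\Q$. Since by Definition~\ref{def-non-degenerate-complete-intersection} every non-degenerate complete intersection is an intersection product of strongly transversal hypersurfaces, $\mathcal{R}'_\Q(G/H)$ is generated in degree one and $\varphi$ is surjective.

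Before analyzing the kernel, I would verify that $F$ is well-defined, symmetric, and $n$-linear. Symmetry is manifest from the formula. Multilinearity follows from the piecewise-linear structure of $D \mapsto \Delta(D)$ recalled in Section~\ref{sec-NO-polytope-sph-var}: on each cone on which $\Delta$ is linear, the map $D \mapsto n!\vol_n(\Delta(D))$ is a homogeneous polynomial of degree $n$, and the inclusion--exclusion expression in the statement is precisely its polarization. Independence of the choices of $G$-linearizations follows because, by Corollary~\ref{cor-main}, the right-hand side computes the intrinsic intersection number in the ring of conditions; distinct linearizations merely translate the Newton--Okounkov polytopes without altering their volumes.

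The key step, and the main technical obstacle, is to identify $\ker \varphi$ with $I_{F_s}$. A homogeneous $a \in \Sym^k(V)$ lies in $\ker \varphi$ iff the class of $\varphi(a)$ vanishes in $\mathcal{R}'_k(G/H)_\Q$. By Definition~\ref{def-ring-of-complete-intersections}, this holds iff for every complete-intersection cycle $Z$ of complementary codimension --- necessarily of the form $\varphi(b)$ for some $b \in \Sym^{n-k}(V)$ --- and for generic $g \in G$, one has $|\varphi(a) \cap (g\cdot Z)| = 0$. By Theorem~\ref{th-main} and Corollary~\ref{cor-main} applied to the combined family of hypersurfaces representing $a$ and $g\cdot Z$, which are strongly transversal for generic $g$ by Kleiman's theorem together with the existence of good compactifications, this intersection number equals $F_s(ab)$. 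Hence $\ker \varphi = \{\,a \in \Sym(V) : F_s(ab) = 0 \text{ for all } b \in \Sym(V)\,\} = I_{F_s}$, yielding the required isomorphism. The delicate point is reconciling the ring-of-conditions intersection count, to which Corollary~\ref{cor-main} directly applies, with the ring-of-complete-intersections count; I expect this to reduce to inspection since both compute the same signed count on generic strongly transversal translates of hypersurfaces.

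The final mixed-volume assertion then follows immediately from Corollary~\ref{cor-mixed-vol-Brion-Kaz}: on a cone where $D \mapsto \Delta(D)$ is linear, the function $D \mapsto n!\vol_n(\Delta(D))$ is a genuine polynomial of degree $n$ in $D$, and its polarization is $n!$ times the mixed volume of the associated Newton--Okounkov polytopes, so $F(D_1,\dots,D_n) = n! V(\Delta(D_1),\dots,\Delta(D_n))$ as claimed.
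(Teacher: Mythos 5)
Your proposal is correct and follows essentially the same route as the paper, whose proof is a one-line citation of Theorem~\ref{th-main}, Corollaries~\ref{cor-main} and~\ref{cor-mixed-vol-Brion-Kaz}, and Theorem~\ref{th-ring-of-diff-op}; you have simply filled in the details of how these combine (surjectivity of $\Sym(V)\to\mathcal{R}'_\Q(G/H)$ from the definition of complete-intersection cycles, identification of the kernel with $I_{F_s}$ via the intersection-number formula, and the polarization/mixed-volume identity on cones of linearity). No substantive divergence or gap.
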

\begin{proof}The theorem follows from Theorem \ref{th-main}, Corollaries~\ref{cor-main} and~\ref{cor-mixed-vol-Brion-Kaz}, and Theo\-rem~\ref{th-ring-of-diff-op}.
\end{proof}

Similarly, we have the following description of $\mathcal{R}'_\Q(G/H)$ as a quotient of the ring of dif\-fe\-rential operators.

\begin{Th} \label{th-ring-complete-intersec-vol-poly}
The ring $\mathcal{R}'_\Q(G/H)$ is isomorphic to the algebra associated to the vector spa\-ce~$V = \Div_\Q(G/H)$ and the polynomial~$P$ whose value on an effective divisor~$D$ is equal to~$\vol_n(\Delta(D))$.
\end{Th}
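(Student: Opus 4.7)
The plan is to apply Theorem \ref{th-ring-of-diff-op}(a) directly to $A = \mathcal{R}'_\Q(G/H)$, graded by codimension so that the intersection product respects the grading, with the vector space $V = \Div_\Q(G/H)$ and the natural map $\pi \colon V \to A_1$ sending a divisor to its class.

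First I would verify hypotheses (1)--(5) of Theorem \ref{th-ring-of-diff-op} for $A = \bigoplus_{k=0}^n A_k$. Properties (1) and (2) are straightforward: the only top-dimensional complete intersection cycle (the empty intersection of hypersurfaces) is $[G/H]$, so $A_0 = \Q$, and there are no cycles of negative dimension. Property (3) is immediate from Definition \ref{def-non-degenerate-complete-intersection}, since every element of $\mathcal{C}'$ in codimension $k$ is a $\Z$-combination of intersections of $k$ strongly transversal hypersurfaces and hence a polynomial in $A_1$. The linear isomorphism $f\colon A_n \to \Q$ comes from sending a $0$-cycle $\sum a_i p_i$ to $\sum a_i$; this is well-defined modulo $\sim$ because the only complete intersection of complementary codimension is $G/H$ itself, so the equivalence on $\mathcal{C}'_0$ reduces to equality of degrees. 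Non-degeneracy of the pairing $B_k(a,b)=f(ab)$ is forced by the very definition of $\sim$: one has $[V]=0$ in $A_k$ iff $|V\cap gZ|=0$ for every complete intersection $Z$ of complementary codimension and generic $g\in G$, which means precisely that $[V]$ lies in the kernel of $B_k$. Here we also use that $[gZ]=[Z]$ in $\mathcal{R}'_\Q(G/H)$ for generic $g$. Surjectivity of $\pi$ is clear since $A_1$ is spanned by classes of prime divisors.

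Granting (1)--(5), Theorem \ref{th-ring-of-diff-op}(a) produces an isomorphism between $A$ and the algebra associated to the pair $(V, P)$, where $P$ is the degree $n$ polynomial
\begin{equation*}
P(D) \;=\; \frac{1}{n!}\, f\bigl(\pi(D)^n\bigr) \;=\; \frac{1}{n!}\, f\bigl([D]^n\bigr).
\end{equation*}
For an effective divisor $D$, the value $f([D]^n)$ is by definition the self-intersection number of $D$ in the ring of complete intersections, which by Theorem \ref{th-main} equals $n!\,\vol_n(\Delta(D))$. Hence $P(D) = \vol_n(\Delta(D))$ on effective divisors, and since effective divisors span $\Div_\Q(G/H)$, this uniquely determines the homogeneous polynomial $P$ as a function on all of $V$.

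The main subtlety I expect is that Theorem \ref{th-main} is stated for the self-intersection of $D$ in the ring of conditions $\mathcal{R}(G/H)$, whereas what is needed here is the self-intersection in $\mathcal{R}'(G/H)$. The two numbers coincide because the relevant intersection of $n$ generic $G$-translates of $D$ is a strongly transversal complete intersection, so the computation already takes place inside $\mathcal{C}'$; carefully verifying that the good-compactification plus Kleiman transversality argument in the proof of Theorem \ref{th-main} passes verbatim to $\mathcal{R}'$ is the step I would check most attentively.
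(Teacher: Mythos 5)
Your proposal is correct and follows exactly the route the paper takes: the paper's own proof is the one-line observation that the statement follows from Theorem \ref{th-main}, Corollary \ref{cor-main} and Theorem \ref{th-ring-of-diff-op}, and your argument is precisely a careful fleshing-out of that (verifying hypotheses (1)--(5) for $\mathcal{R}'_\Q(G/H)$ graded by codimension and identifying $P(D)=\frac{1}{n!}f\bigl([D]^n\bigr)=\vol_n(\Delta(D))$ via Theorem \ref{th-main}). The subtlety you flag at the end -- that the self-intersection number in $\mathcal{R}'(G/H)$ agrees with the one in $\mathcal{R}(G/H)$ because the generic translates form a strongly transversal complete intersection -- is real but resolves exactly as you say.
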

\begin{proof}The theorem follows from Theorem \ref{th-main}, Corollary \ref{cor-main} and Theorem~\ref{th-ring-of-diff-op}.
\end{proof}

Finally, the ring of complete intersections $\mathcal{R}'(G/H)$ itself can be realized as the subring of~$\mathcal{R}'_\Q(G/H)$ generated by~$\Z$ and the image of~$\Div(G/H)$.

\section{Ring of complete intersections of an arbitrary variety}\label{sec-ring-complete-intersec-general}

Let $X$ be a variety (not necessarily complete). In \cite{KKh-MMJ, KKh-CMB} the authors consider the collection $\K(X)$ of all finite-dimensional vector subspaces of the field of rational functions $\C(X)$. This collection is a semigroup with respect to the multiplication of subspaces. Namely, for $L, M \in \K(X)$ we define
\begin{gather*} LM = \text{span}\{fg \,|\, f \in L, g \in M\}.\end{gather*}
We then consider the Grothendieck group $\G(X)$ associated to the semigroup of subspaces $\K(X)$.

Given $L_1, \ldots, L_n \in \K(X)$, one introduces {\it intersection index} $[L_1, \ldots, L_n] \in \Z$.
The intersection index $[L_1, \ldots , L_n]$ is the number of solutions $x \in X$ of a generic system of equations $f_1(x) = \cdots = f_n = 0$, where $f_i \in L_i$. In counting the solutions, we neglect the solutions $x$ at which all the functions in some space $L_i$ vanish as well as the solutions at which at least one function from some space $L_i$ has a pole. In \cite{KKh-MMJ, KKh-CMB} it is shown that the intersection index is well-defined and is multi-additive with respect to multiplication of subspaces. Thus it naturally extends to the Grothendieck group $\G(X)$ and to $\G_\Q(X) = \G(X) \otimes_\Z \Q$ the corresponding $\Q$-vector space. In \cite[Section~4.3]{KKh-Annals} we associate to each element $L\in \K(X)$ its Newton--Okounkov body $\Delta(L) \subset \R^n$ in such a way that the following conditions hold:
\begin{enumerate}\itemsep=0pt
\item[1)] $\Delta (L_1)+\Delta (L_2) \subset \Delta (L_1 L_2)$,
\item[2)] $[L,\dots,L]=n! \vol_n(\Delta(L)$.
\end{enumerate}

We view the intersection index of subspaces of rational functions as a birational version of the intersection theory of (Cartier) divisors (more generally linear systems).
The main result in \cite{KKh-CMB} is that the Grothendieck group~$\G(X)$ is naturally isomorphic to the group of b-divisors (introduced by Shokurov). Roughly speaking, a b-divisor is an equivalence class of Cartier divisors on any birational model of~$X$. The isomorphism sends the intersection index to the usual intersection number of Cartier divisors.

\begin{Def}[ring of complete intersections] \label{def-ring-of-complete-intersec-general}
We call the ring associated to the $\Q$-vector space $\G_\Q(X)$ and the intersection index, the {\it ring of complete intersections} over~$\Q$ of~$X$ and denote it by $\A_\Q(X)$. We call the subring $\A(X)$ of $\A_\Q(X)$ generated by~$\Z$ and the image of~$\G(X)$, the {\it ring of complete intersections} of~$X$.
\end{Def}

As above Theorem~\ref{th-ring-of-diff-op} suggests two descriptions of the ring $\A(X)$ of complete intersections of~$X$ in terms of volumes of Newton--Okounkov bodies associated to elements of~$\G(X)$.

\subsection*{Acknowledgements}
We are grateful to the anonymous referee for careful reading of the paper and several invaluable suggestions and corrections. The first author is partially supported by a National Science Foundation grant (grant ID: 1601303). The second author is partially supported by the Canadian grant No.~156833-12.

\pdfbookmark[1]{References}{ref}
\LastPageEnding

\end{document}